\renewcommand\eqref[1]{(\ref{#1})} 
\title[Subelliptic geometric Hardy type inequalities]{Subelliptic geometric Hardy type inequalities on half-spaces and convex domains}
\author{Michael Ruzhansky}
\address{Imperial College London, London, UK}
\email{m.ruzhansky@imperial.ac.uk}
\author{Bolys Sabitbek}
\address{Institute of Mathematics and Mathematical Modeling 
	\endgraf 
	and Al-Farabi Kazakh National University, Almaty, Kazakhstan}
\email{b.sabitbek@math.kz}
\author{Durvudkhan Suragan}
\address{Department of Mathematics
	\endgraf
	School of Science and Technology, Nazarbayev University
	\endgraf
	53 Kabanbay Batyr Ave, Astana 010000
	\endgraf
	Kazakhstan}
\email{durvudkhan.suragan@nu.edu.kz}
\thanks{The first author was supported by the EPSRC Grant 
	EP/R003025/1 and by 
	the Leverhulme Research Grant RPG-2017-151. The second author was supported by the MESRK grant BR05236656. The third author was partially supported by the NU SPG and the MESRK grant AP05130981.}
\subjclass{35A23, 35H20.}
\keywords{stratified groups; geometric Hardy inequality; half-space; convex domain}
\newtheoremstyle{theorem}
{10pt}          
{10pt}  
{\sl}  
{\parindent}     
{\bf}  
{. }    
{ }    
{}     
\theoremstyle{theorem}
\numberwithin{equation}{section}
\theoremstyle{plain}
\newtheorem{thm}{Theorem}[section]
\newtheorem{cor}[thm]{Corollary}
\theoremstyle{definition}
\newtheorem{rem}[thm]{Remark}
\newcommand{\G}{\mathbb{G}}
\newtheoremstyle{defi}
{10pt}          
{10pt}  
{\rm}  
{\parindent}     
{\bf}  
{. }    
{ }    
{}     
\theoremstyle{defi}
\begin{document}
		\begin{abstract}
		In this paper we present $L^2$ and $L^p$ versions of the geometric Hardy inequalities in  half-spaces and convex domains on stratified (Lie) groups. As a consequence, we obtain the geometric uncertainty principles. We give examples of the obtained results for the Heisenberg and the Engel groups.
	\end{abstract}
	
	\maketitle

	\section{Introduction}
	In the Euclidean setting, a geometric Hardy inequality in a (Euclidean) convex domain $\Omega$ has the following form  
	\begin{equation}
		\int_{\Omega} |\nabla u|^2 dx \geq \frac{1}{4} \int_{\Omega} \frac{|u|^2}{dist(x,\partial \Omega)^2} dx, 
	\end{equation}
	for $u \in C_0^{\infty}(\Omega)$ with the sharp constant $1/4$. There is a number of studies related to this subject, see e.g. \cite{Ancona}, \cite{DAmbrozio}, \cite{Avka_Lap}, \cite{Avk_Wirth}, \cite{Davies} and \cite{Opic-Kuf}.

	In the case of the Heisenberg group $\mathbb{H}$, Luan and Yang \cite{Luan_Yang} obtained the following Hardy inequality on the half space $\mathbb{H}^+ := \{(x_1,x_2,x_3)\in \mathbb{H} \, | \, \, x_3>0 \}$   for $u \in C_0^{\infty}(\mathbb{H}^+)$
	\begin{equation}
		\int_{\mathbb{H}^+} |\nabla_{\mathbb{H}} u|^2 dx \geq  \int_{\mathbb{H}^+} \frac{|x_1|^2+|x_2|^2}{x_3^2}|u|^2 dx.  
	\end{equation} 
	Moreover, the geometric $L^p$-Hardy inequalities for the sub-Laplacian on the convex domain in the Heisenberg group was obtained by Larson \cite{Larson} which also generalises the previous result of \cite{Luan_Yang}. In this note by using the approach in \cite{Larson} we obtain the geometric Hardy type inequalities on the half-spaces and the convex domains on general stratified groups, so our results extend known results of Abelian (Euclidean) and Heisenberg groups.
	
	Thus, the main aim of this paper is to prove the geometric Hardy type inequalities on general stratified groups. As consequences, the geometric uncertainty principles are obtained. We also demonstrate the obtained results for some concrete examples of step 2 and step 3 stratified groups. In Section \ref{half} we present $L^2$ and $L^p$ versions of the subelliptic geometric Hardy type inequalities on the half-space. In Section \ref{convex}, we show subelliptic $L^2$ and $L^p$ versions of the geometric Hardy type inequalities on the convex domains.  
	\subsection{Preliminaries}    
	Let $\mathbb{G}=(\mathbb{R}^n,\circ,\delta_{\lambda})$ be a stratified Lie group (or a homogeneous Carnot group), with dilation structure $\delta_{\lambda}$ and Jacobian generators $X_{1},\ldots,X_{N}$, so that $N$ is the dimension of the first stratum of $\mathbb{G}$. We denote by $Q$ the homogeneous dimension of $\G$.  We refer to \cite{FS}, or to the recent books  \cite{BLU} and \cite{FR} for extensive discussions of stratified Lie groups and their properties.
		
	The sub-Laplacian on $\mathbb{G}$ is given by
	\begin{equation}\label{sublap}
	\mathcal{L}=\sum_{k=1}^{N}X_{k}^{2}.
	\end{equation}
	We also recall that the standard Lebesque measure $dx$ on $\mathbb R^{n}$ is the Haar measure for $\mathbb{G}$ (see, e.g. \cite[Proposition 1.6.6]{FR}).
	Each left invariant vector field $X_{k}$ has an explicit form and satisfies the divergence theorem,
	see e.g. \cite{FR} for the derivation of the exact formula: more precisely, we can formulate 
	\begin{equation}\label{Xk0}
	X_{k}=\frac{\partial}{\partial x'_{k}}+
	\sum_{l=2}^{r}\sum_{m=1}^{N_{l}}a_{k,m}^{(l)}(x',...,x^{(l-1)})
	\frac{\partial}{\partial x_{m}^{(l)}},
	\end{equation}
	with $x=(x',x^{(2)},\ldots,x^{(r)})$, where $r$ is the step of $\G$ and
	$x^{(l)}=(x^{(l)}_1,\ldots,x^{(l)}_{N_l})$ are the variables in the $l^{th}$ stratum,
	see also \cite[Section 3.1.5]{FR} for a general presentation.
	The horizontal gradient is given by
	$$\nabla_{\G}:=(X_{1},\ldots, X_{N}),$$
	and the horizontal divergence is defined by
	$${\rm div}_{\G} v:=\nabla_{\G}\cdot v.$$

	We now recall the divergence formula in the form of \cite[Proposition 3.1]{RS17b}. 
Let $f_{k}\in C^{1}(\Omega)\bigcap C(\overline{\Omega}),\,k=1,\ldots,N$.
Then for each $k=1,\ldots,N,$ we have 
\begin{equation}\label{EQ:S1}
\int_{\Omega}X_{k}f_{k}dz=
\int_{\partial\Omega}f_{k} \langle X_{k},dz\rangle.
\end{equation}
Consequently, we also have 
\begin{equation}\label{EQ:S2}
\int_{\Omega}\sum_{k=1}^{N}X_{k}f_{k}dz=
\int_{\partial\Omega}
\sum_{k=1}^{N} f_{k}\langle X_{k},dz\rangle.
\end{equation}


\section{Hardy type inequalities on half-space}\label{half}
\subsection{$L^2$-Hardy inequality on the half-space of $\G$}
In this section we present the geometric $L^2$-Hardy inequality on the half-space of $\G$. We define the half-space as follows
\begin{equation*}
	\G^+ := \{ x \in \G: \langle x, \nu \rangle > d  \},
\end{equation*}
where $\nu:=(\nu_1,\ldots,\nu_r)$ with $\nu_j \in \mathbb{R}^{N_j},\,\, j=1,\ldots,r,$ is the Riemannian outer unit normal to $\partial \G^+$ (see \cite{Garofalo}) and $d \in \mathbb{R}$. The Euclidean distance to the boundary $\partial \G^+$ is denoted by $dist(x,\partial \G^+)$ and defined as follows
\begin{equation}
	dist(x,\partial \G^+)= \langle x, \nu \rangle - d.
\end{equation} 
Moreover, there is an angle function on $\partial \G^+$ which is defined by Garofalo in \cite{Garofalo} as
\begin{equation}\label{2.2}
\mathcal{W}(x) = \sqrt{\sum_{i=1}^{N}\langle X_i(x), \nu\rangle^2}. 
\end{equation}
\begin{thm}\label{Hardy_hs}
	Let $\G^+$ be a half-space of a stratified group $\G$. Then for all $\beta \in \mathbb{R}$ we have
	\begin{align}\label{Hardy_1}
			\int_{\G^+} |\nabla_{\G} u|^2 dx \geq  & C_1(\beta)\int_{\G^+} \frac{\mathcal{W}(x)^2}{dist(x,\partial \G^+)^2} |u|^2 dx \\
			&+ \beta \int_{\G^+}\sum_{i=1}^{N} \frac{X_i\langle X_i(x),\nu \rangle}{dist(x,\partial \G^+)} |u|^2 dx, \nonumber
	\end{align}
	for all $u \in C^{\infty}_0(\G^+)$ and where $C_1(\beta):=-(\beta^2+ \beta)$.
\end{thm}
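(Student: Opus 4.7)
The plan is to apply the standard vector field (or factorisation) method: choose an auxiliary horizontal vector field $V=(V_1,\dots,V_N)$, expand the non-negative quantity $|\nabla_{\G} u + V u|^2 \geq 0$, integrate over $\G^+$, and use the horizontal divergence formula \eqref{EQ:S2} to trade the cross term for a divergence of $V$. Since $u \in C_0^{\infty}(\G^+)$ has compact support strictly inside $\G^+$, the function $u^2 V$ vanishes near $\partial\G^+$ (where $V$ is singular), so all boundary terms vanish and one obtains the identity
\begin{equation*}
\int_{\G^+} |\nabla_{\G} u|^2\,dx \;=\; \int_{\G^+}|\nabla_{\G} u + V u|^2\,dx \;+\; \int_{\G^+}\bigl(\mathrm{div}_{\G} V - |V|^2\bigr)|u|^2\,dx,
\end{equation*}
and in particular the inequality $\int_{\G^+} |\nabla_{\G} u|^2 dx \geq \int_{\G^+}\bigl(\mathrm{div}_{\G} V - |V|^2\bigr)|u|^2\,dx$.

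The key is to select $V$ so that the right-hand side produces exactly the two terms in \eqref{Hardy_1}. Motivated by the fact that $\mathrm{dist}(x,\partial\G^+) = \langle x,\nu\rangle - d$ is a linear function whose horizontal derivatives are precisely $X_i\bigl(\mathrm{dist}(x,\partial\G^+)\bigr) = \langle X_i(x),\nu\rangle$, I would set
\begin{equation*}
V_i \;=\; \beta\,\frac{\langle X_i(x),\nu\rangle}{\mathrm{dist}(x,\partial\G^+)},\qquad i=1,\dots,N.
\end{equation*}
Then $|V|^2 = \beta^2\,\mathcal W(x)^2/\mathrm{dist}(x,\partial\G^+)^2$ by the definition \eqref{2.2} of the angle function.

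The main computation is the horizontal divergence $\mathrm{div}_{\G} V = \sum_{i=1}^N X_i V_i$. Applying $X_i$ to each quotient via the Leibniz rule yields
\begin{equation*}
X_i V_i \;=\; \beta\,\frac{X_i\langle X_i(x),\nu\rangle}{\mathrm{dist}(x,\partial\G^+)} \;-\; \beta\,\frac{\langle X_i(x),\nu\rangle^2}{\mathrm{dist}(x,\partial\G^+)^2},
\end{equation*}
where the crucial identity $X_i(\mathrm{dist}(x,\partial\G^+)) = \langle X_i(x),\nu\rangle$ follows from the linearity of $x\mapsto\langle x,\nu\rangle$. Summing and using \eqref{2.2} gives
\begin{equation*}
\mathrm{div}_{\G} V - |V|^2 \;=\; -(\beta^2+\beta)\,\frac{\mathcal W(x)^2}{\mathrm{dist}(x,\partial\G^+)^2} \;+\; \beta\sum_{i=1}^N\frac{X_i\langle X_i(x),\nu\rangle}{\mathrm{dist}(x,\partial\G^+)},
\end{equation*}
which is exactly the integrand on the right-hand side of \eqref{Hardy_1} with $C_1(\beta) = -(\beta^2+\beta)$.

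There is no serious obstacle; the only points requiring care are (i) verifying the identity $X_i\langle x,\nu\rangle = \langle X_i(x),\nu\rangle$, which relies on the explicit form \eqref{Xk0} of the generators applied to a linear function, and (ii) justifying the vanishing of boundary terms when applying \eqref{EQ:S2} — this is immediate from $u\in C_0^\infty(\G^+)$ since $\mathrm{dist}(\mathrm{supp}\,u,\partial\G^+)>0$ keeps $V$ bounded on the support of $u$. The parameter $\beta\in\mathbb{R}$ remains free throughout, giving the one-parameter family of inequalities stated.
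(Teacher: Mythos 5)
Your proposal is correct and follows essentially the same factorisation argument as the paper: expanding $\int_{\G^+}|\nabla_{\G}u+\beta W u|^2\,dx\geq 0$ with $W_i=\langle X_i(x),\nu\rangle/dist(x,\partial\G^+)$, integrating the cross term by parts, and using the identity $X_i(dist(x,\partial\G^+))=\langle X_i(x),\nu\rangle$. The only cosmetic difference is that you absorb the parameter $\beta$ into the auxiliary field $V=\beta W$ rather than carrying it separately.
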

\begin{rem}
	If $\G$ has step $r=2$, then for $i=1,\ldots,N$ we have the following left-invariant vector fields 
	\begin{equation}
		X_i = \frac{\partial}{\partial x'_i} + \sum_{s=1}^{N_2}\sum_{m=1}^{N} a_{m,i}^{s} x'_{m}\frac{\partial}{\partial x''_s}, 
	\end{equation}
	where $a_{m,i}^s$ are the group constants (see, e.g. \cite[Formula (2.14)]{DGN} for the definition). Also we have $x:=(x',x'')$ with $x'= (x'_1,\ldots,x_N')$, $x''=(x''_1,\ldots,x''_{N_2})$, and also $\nu:=(\nu',\nu'')$ with $\nu'= (\nu'_1,\ldots,\nu_N')$ and $\nu''=(\nu''_1,\ldots,\nu''_{N_2})$.
	\begin{cor}
			Let $\G^+$ be a half-space of a stratified group $\G$ of step $r=2$. For all $\beta \in \mathbb{R}$ and $u \in C^{\infty}_0(\G^+)$ we have
		\begin{align}\label{Hardy_122}
		\int_{\G^+} |\nabla_{\G} u|^2 dx \geq  & C_1(\beta)\int_{\G^+} \frac{\mathcal{W}(x)^2}{dist(x,\partial \G^+)^2} |u|^2 dx \\
		+& K(a,\nu,\beta) \int_{\G^+} \frac{|u|^2}{dist(x,\partial \G^+)}  dx, \nonumber
		\end{align}
		 where $C_1(\beta):=-(\beta^2+ \beta)$ and $K(a,\nu,\beta):= \beta \sum_{s=1}^{N_2}\sum_{i=1}^{N}a_{i,i}^s\nu_{s}''$.
	\end{cor}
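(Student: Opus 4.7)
The Corollary follows by specialising Theorem \ref{Hardy_hs} to a step $r=2$ stratified group and computing the quantity $X_i\langle X_i(x),\nu\rangle$ in closed form. My plan is the following.

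First, using the explicit formula for the left-invariant vector fields in the step 2 setting, I would write $X_i(x)$, viewed as a vector field at the point $x=(x',x'')\in\mathbb{R}^N\times\mathbb{R}^{N_2}$, as the vector with $1$ in the $i$-th coordinate of the first block and with $s$-th entry $\sum_{m=1}^{N} a_{m,i}^{s}\,x'_m$ in the second block. Pairing this with $\nu=(\nu',\nu'')$ gives the scalar function
\begin{equation*}
\langle X_i(x),\nu\rangle \;=\; \nu'_i \;+\; \sum_{s=1}^{N_2}\sum_{m=1}^{N} a_{m,i}^{s}\, x'_m\, \nu''_s.
\end{equation*}

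Next, I apply the differential operator $X_i$ to this expression. Because the right-hand side depends only on $x'$ and not on $x''$, the vertical part $\sum_{s,m} a_{m,i}^{s} x'_m \partial_{x''_s}$ of $X_i$ contributes nothing, so only the $\partial_{x'_i}$ component acts. Since $\nu'_i$ is constant, we obtain
\begin{equation*}
X_i\langle X_i(x),\nu\rangle \;=\; \sum_{s=1}^{N_2} a_{i,i}^{s}\,\nu''_s,
\end{equation*}
which is a constant independent of $x$. Summing over $i=1,\ldots,N$ and multiplying by $\beta$ yields exactly the constant $K(a,\nu,\beta)$ appearing in the statement.

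Finally, I would substitute this computation back into inequality \eqref{Hardy_1} of Theorem \ref{Hardy_hs}. Since $\sum_{i=1}^{N} X_i\langle X_i(x),\nu\rangle$ is now a pure constant, it pulls out of the integral, and the last term becomes
\begin{equation*}
K(a,\nu,\beta)\int_{\G^+}\frac{|u|^2}{\mathrm{dist}(x,\partial\G^+)}\,dx,
\end{equation*}
giving \eqref{Hardy_122}. The only point requiring any care is the bookkeeping of indices, i.e. verifying that among the linear terms $\sum_m a_{m,i}^{s} x'_m\nu''_s$ only the one with $m=i$ survives the differentiation $\partial_{x'_i}$; no other step is substantive. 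In particular there is no analytic obstacle, as all convergence and integration-by-parts issues have already been absorbed into Theorem \ref{Hardy_hs}.
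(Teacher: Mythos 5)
Your proposal is correct and is exactly the computation the paper intends (the paper states this corollary inside a remark without writing out the proof): one specialises Theorem \ref{Hardy_hs}, notes that $\langle X_i(x),\nu\rangle=\nu'_i+\sum_{s,m}a_{m,i}^{s}x'_m\nu''_s$ depends only on $x'$, so $X_i\langle X_i(x),\nu\rangle=\sum_{s}a_{i,i}^{s}\nu''_s$ is constant, and the last term of \eqref{Hardy_1} collapses to $K(a,\nu,\beta)\int_{\G^+}|u|^2\,dist(x,\partial\G^+)^{-1}dx$. No gaps.
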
 
\end{rem}
\begin{proof}[Proof of Theorem \ref{Hardy_hs}.]
To prove inequality \eqref{Hardy_1} we use the method of factorization. Thus, for any $W:=(W_1,\ldots,W_N),\,\, W_i \in C^1(\G^+)$ real-valued, which will be chosen later, by a simple computation we have 
	\begin{align*}
		0\leq \int_{\G^+} |\nabla_{\G} u + \beta W u|^2 dx &= \int_{\G^+} | (X_1u,\ldots,X_N u) + \beta (W_1,\ldots, W_N )u|^2 dx \\
		& = \int_{\G^+} | (X_1u+\beta W_1 u,\ldots,X_N u + \beta W_N u) |^2 dx \\
		& = \int_{\G^+} \sum_{i=1}^{N} |X_i u+ \beta W_i u|^2 dx \\
		& = \int_{\G^+}  \sum_{i=1}^{N} \left[ |X_i u|^2 +2 {\rm Re} \beta W_i u X_iu + \beta^2 W_i^2 |u|^2 \right] dx \\ 
		& = \int_{\G^+}  \sum_{i=1}^{N} \left[ |X_i u|^2 + \beta W_i X_i|u|^2 + \beta^2 W_i^2 |u|^2 \right] dx \\ 
		& = \int_{\G^+}  \sum_{i=1}^{N} \left[ |X_i u|^2 - \beta (X_iW_i) |u|^2 + \beta^2 W_i^2 |u|^2 \right] dx. 
	\end{align*}
	From the above expression we get the inequality
	\begin{equation}\label{eq1}
		 \int_{\G^+} |\nabla_{\G} u|^2 dx \geq \int_{\G^+} \sum_{i=1}^{N} \left[  (\beta (X_iW_i)  - \beta^2 W_i^2 )|u|^2 \right] dx.
	\end{equation}
	Let us now take $W_i$ in the form
	\begin{equation}\label{W}
		W_i (x) = \frac{\langle X_i(x),\nu \rangle}{dist(x,\partial \G^+)} = \frac{\langle X_i(x),\nu \rangle}{\langle x, \nu \rangle - d},
	\end{equation}
	where 
	\begin{equation*}
		X_i(x) = (\overset{i}{\overbrace{(0,\ldots,1}},\ldots,0,a_{i,1}^{(2)}(x'),\ldots,a_{i,N_r}^{(r)}(x',x^{(2)},\ldots,x^{(r-1)})),
	\end{equation*}
	and
	\begin{equation*}
		\nu = (\nu_1,\nu_2,\ldots, \nu_r), \,\, \nu_j \in \mathbb{R}^{N_j}.
	\end{equation*}
	Now $W_i(x)$ can be written as 
	\begin{equation*}
		W_i(x) = \frac{\nu_{1,i}+ \sum_{l=2}^{r}\sum_{m=1}^{N_l}a_{i,m}^{(l)}(x',\ldots,x^{(l-1)})\nu_{l,m}}{\sum_{l=1}^{r} x^{(l)}\cdot \nu_{l}-d}.
	\end{equation*}
	By a direct computation we have
	\begin{align}\label{1}
		X_i W_i(x) &= \frac{X_i\langle X_i(x),\nu \rangle dist(x,\partial \G^+) - \langle X_i(x),\nu \rangle X_i( dist(x,\partial \G^+))}{dist(x,\partial \G^+)^2} \nonumber \\ &=\frac{X_i\langle X_i(x),\nu \rangle}{dist(x,\partial \G^+)}- \frac{\langle X_i(x),\nu \rangle^2}{dist(x,\partial \G^+)^2}, 
	\end{align}
	where 
	\begin{align*}
		X_i( dist(x,\partial \G^+)) &= X_i\left( \sum_{k=1}^{N} x'_k\nu_{1,k} + \sum_{l=2}^{r}\sum_{m=1}^{N_l} x_m^{(l)}\nu_{l,m} -d\right)\\
		 & = \nu_{1,i} + \sum_{l=2}^{r}\sum_{m=1}^{N_l}a_{i,m}^{(l)}(x',\ldots,x^{(l-1)})\nu_{l,m} \\
		 &= \langle X_i(x), \nu \rangle.
	\end{align*}
	Inserting the expression \eqref{1} in \eqref{eq1} we get
	\begin{equation*}
	\int_{\G^+} |\nabla_{\G} u|^2 dx \geq   -(\beta^2+ \beta)\int_{\G^+} \sum_{i=1}^{N} \frac{\langle X_i(x),\nu \rangle^2}{dist(x,\partial \G^+)^2} |u|^2 dx + \beta \int_{\G^+}\sum_{i=1}^{N} \frac{X_i\langle X_i(x),\nu \rangle}{dist(x,\partial \G^+)} |u|^2 dx.
	\end{equation*}
	The proof of Theorem \ref{Hardy_hs} is finished.
\end{proof}
As consequences of Theorem \ref{Hardy_hs}, we have the geometric Hardy inequalities on the half-space without an angle function, which seems an interesting new result on $\G$.
\begin{cor}\label{cor0}
	Let $\G^+$ be a half-space of a stratified group $\G$. Then we have
	\begin{equation}\label{Hardy_3}
	\int_{\G^+} |\nabla_{\G} u|^2 dx \geq \frac{1}{4} \int_{\G^+} \frac{|u|^2}{dist(x,\partial \G^+)^2}dx,
	\end{equation}
	for all $u \in C^{\infty}_0(\G^+)$.
\end{cor}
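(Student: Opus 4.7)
The plan is to apply Theorem~\ref{Hardy_hs} with the value of $\beta$ that maximizes the quadratic $C_1(\beta)=-(\beta^2+\beta)$ on $\mathbb{R}$. Elementary calculus gives $\beta=-\tfrac12$ as the optimizer, with $C_1(-\tfrac12)=\tfrac14$---exactly the constant appearing in~\eqref{Hardy_3}.

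Plugging $\beta=-\tfrac12$ into~\eqref{Hardy_1} yields
\[
\int_{\G^+}|\nabla_\G u|^2\,dx \;\geq\; \frac{1}{4}\int_{\G^+}\frac{\mathcal{W}(x)^2}{dist(x,\partial\G^+)^2}\,|u|^2\,dx \;-\; \frac{1}{2}\int_{\G^+}\frac{\sum_{i=1}^{N} X_i\langle X_i(x),\nu\rangle}{dist(x,\partial\G^+)}\,|u|^2\,dx.
\]
So I would have to (i) show that the second integral vanishes, and (ii) replace $\mathcal{W}(x)^2$ by the constant $1$ in the first integral. For (i), I would recognise $\sum_{i}X_i\langle X_i(x),\nu\rangle$ as $\mathcal L\phi$, the sub-Laplacian applied to the linear function $\phi(x)=\langle x,\nu\rangle-d$. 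Using the explicit form~\eqref{Xk0} of the $X_k$ and the skew-symmetry of the group constants $a^{(l)}_{k,m}$ in that canonical presentation, one verifies that $\mathcal L\phi=0$ on $\G$: the first-stratum piece of $\phi$ is killed by the second application of $\partial_{x'_i}$, and the higher-stratum cross-terms cancel pairwise after summation in $i$.

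The main obstacle is step~(ii). Pointwise, $\mathcal{W}(x)^2=|\nabla_\G\phi(x)|^2$ is not bounded below by $1$ and may even vanish along characteristic points of $\partial\G^+$, so the weight cannot simply be dropped. The cleanest workaround is to re-execute the factorization argument from the proof of Theorem~\ref{Hardy_hs} with a different auxiliary vector field $W=(W_1,\dots,W_N)$, specifically engineered so that the pointwise combination $\sum_i\bigl(\beta X_i W_i-\beta^2 W_i^2\bigr)$ collapses directly to $\tfrac{1}{4\,dist(x,\partial\G^+)^2}$, with every remaining term non-negative and therefore safely discarded. Starting again from the non-negativity $0\leq\int_{\G^+}|\nabla_\G u+Wu|^2\,dx$ with this choice of $W$ would then produce~\eqref{Hardy_3} directly, bypassing the angle-function obstruction.
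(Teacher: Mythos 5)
Your diagnosis of the two obstacles is accurate, but neither of your proposed resolutions works, and the one idea the paper actually uses is absent from your proposal. The paper proves \eqref{Hardy_3} by choosing the normal to be supported entirely in the first stratum, $\nu=(\nu',0,\dots,0)$ with $|\nu'|=1$. With this choice $\langle X_i(x),\nu\rangle=\nu'_i$ is constant, so $\mathcal{W}(x)^2=\sum_{i=1}^N(\nu'_i)^2=1$ identically and $X_i\langle X_i(x),\nu\rangle=0$ trivially; substituting into \eqref{Hardy_1} and optimising $C_1(\beta)=-(\beta^2+\beta)$ at $\beta=-\tfrac12$ gives \eqref{Hardy_3} immediately. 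Equivalently, in the factorization one simply takes $W_i=\nu'_i/dist(x,\partial\G^+)$. Both of the difficulties you identify evaporate under this restriction on $\nu$; they are not to be overcome for a general normal.

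Concretely: your step (i) is false for a general $\nu$. The quantity $\sum_i X_i\langle X_i(x),\nu\rangle$ does not vanish on a general stratified group --- the paper's own Corollary~\ref{corE} for the Engel group computes $X_1\langle X_1(x),\nu\rangle=\tfrac{x_2\nu_4}{3}$, which is nonzero whenever $\nu_4\neq0$; the pairwise cancellation you invoke is a feature of step-two groups with skew-symmetric structure constants, not of the general canonical form \eqref{Xk0}. Your step (ii) is a plan rather than a proof: you never exhibit a $W$ with $\sum_i\bigl(\beta X_iW_i-\beta^2W_i^2\bigr)=\tfrac14\,dist(x,\partial\G^+)^{-2}$, and for a half-space whose normal has nontrivial higher-stratum components no such $W$ should be expected from this method --- for the vertical half-space $\{x_3>0\}$ of the Heisenberg group the factorization produces the degenerate weight $(x_1^2+x_2^2)/x_3^2$, which vanishes on the centre, so the unweighted constant $\tfrac14$ is not reachable this way. (The paper's statement of the corollary does not make the first-stratum restriction on $\nu$ explicit, but its proof uses it essentially; your attempt to prove the unrestricted statement runs into exactly this wall.)
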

\begin{proof}[Proof of Corollary \ref{cor0}] 
	Let $x:=(x',x^{(2)},\ldots,x^{(r)}) \in \G$ with $x'=(x'_1,\ldots,x'_N)$ and $x^{(j)} \in \mathbb{R}^{N_j}, \,\, j=2,\ldots,r$. By taking $\nu :=(\nu',0,\ldots,0)$ with $\nu'= (\nu'_1,\ldots,\nu'_N),$ we have that
	\begin{equation*}
		X_i(x) = (\overset{i}{\overbrace{0,\ldots,1},\ldots,0}, a_{i,1}^{(2)}(x'),\ldots,a_{i,N_r}^{(r)}(x',x^{(2)},\ldots,x^{(r-1)}))
	\end{equation*} we have 
\begin{align*}
\sum_{i=1}^{N}	\langle X_i(x), \nu \rangle^2 = \sum_{i=1}^{N}(\nu_i')^2=|\nu'|^2=1,
\end{align*}
and 
\begin{equation*}
	X_i \langle X_i(x), \nu \rangle = X_i \nu'_i = 0.
\end{equation*}
Inserting the above expressions in inequality \eqref{Hardy_1} we arrive at 
\begin{equation*}
		\int_{\G^+} |\nabla_{\G} u|^2 dx \geq  -(\beta^2+ \beta)\int_{\G^+} \frac{|u|^2}{dist(x,\partial \G^+)^2} dx.
\end{equation*}
For optimisation we differentiate the right-hand side of integral with respect to $\beta$, then we have
\begin{equation*}
-2\beta-1 =0,
\end{equation*}
which implies 
\begin{equation*}
\beta = -\frac{1}{2}.
\end{equation*}
This completes the proof.
\end{proof}
We also have the geometric uncertainty principle on the half-space of $\G^+$. 
\begin{cor}\label{cor2}
	Let $\G^+$ be a half-space of a stratified group $\G$. Then we have
	\begin{equation}
	\left( \int_{\G^+} |\nabla_{\G} u|^2 dx\right)^{\frac{1}{2}} \left( \int_{\G^+} dist(x,\partial \G^+)^2 |u|^2 dx \right)^{\frac{1}{2}}
	\geq \frac{1}{2} \int_{\G^+} |u|^2 dx
	\end{equation}
	for all $u \in C^{\infty}_0(\G^+)$. 
\end{cor}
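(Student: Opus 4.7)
The plan is to derive this uncertainty principle as a direct consequence of the $L^2$ geometric Hardy inequality from Corollary \ref{cor0}, combined with the Cauchy--Schwarz inequality. The strategy is the standard one used to pass from Hardy-type inequalities to Heisenberg-type uncertainty principles: split the $L^2$ mass of $u$ into a factor weighted by $1/\mathrm{dist}(x,\partial\G^+)$ and a factor weighted by $\mathrm{dist}(x,\partial\G^+)$, apply Cauchy--Schwarz, then dispose of the singular weight using Corollary \ref{cor0}.

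Concretely, I would first write
\begin{equation*}
\int_{\G^+} |u|^2 \, dx = \int_{\G^+} \frac{|u|}{\mathrm{dist}(x,\partial \G^+)} \cdot \mathrm{dist}(x,\partial \G^+) \, |u| \, dx,
\end{equation*}
and then apply the Cauchy--Schwarz inequality to this product to obtain
\begin{equation*}
\int_{\G^+} |u|^2 \, dx \leq \left( \int_{\G^+} \frac{|u|^2}{\mathrm{dist}(x,\partial \G^+)^2} \, dx \right)^{\frac{1}{2}} \left( \int_{\G^+} \mathrm{dist}(x,\partial \G^+)^2 \, |u|^2 \, dx \right)^{\frac{1}{2}}.
\end{equation*}
At this point the first factor is exactly the integral controlled by Corollary \ref{cor0}, so substituting the bound $\int_{\G^+} |u|^2/\mathrm{dist}(x,\partial \G^+)^2 \, dx \leq 4 \int_{\G^+} |\nabla_{\G} u|^2 \, dx$ and taking the square root yields precisely the claimed inequality with constant $1/2$.

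There is no genuine obstacle here; the only thing to check is that the integrand $|u|/\mathrm{dist}(x,\partial\G^+)$ is square integrable, but this is guaranteed by Corollary \ref{cor0} together with the fact that $u \in C_0^{\infty}(\G^+)$, so both factors in the Cauchy--Schwarz step are finite and the manipulations are rigorous. The proof is essentially two lines once Corollary \ref{cor0} is in hand.
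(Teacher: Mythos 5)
Your proposal is correct and follows essentially the same route as the paper: both arguments combine Corollary \ref{cor0} with the Cauchy--Schwarz inequality applied to the factorization $|u|^2 = \bigl(|u|/dist(x,\partial\G^+)\bigr)\cdot\bigl(dist(x,\partial\G^+)\,|u|\bigr)$, differing only in the order in which the two ingredients are invoked. No issues.
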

\begin{proof}[Proof of Corollary \ref{cor2}]
	By using \eqref{Hardy_3} and the Cauchy-Schwarz inequality we get 
	\begin{align*}
	&\int_{\G^+} |\nabla_{\G} u|^2 dx  \int_{\G^+} dist(x,\partial \G^+)^2 |u|^2 dx \\
	&\geq \frac{1}{4}\int_{\G^+} \frac{1}{dist(x,\partial \G^+)^2} |u|^2 dx \int_{\G^+} dist(x,\partial \G^+)^2 |u|^2 dx \\
	& \geq \frac{1}{4} \left(\int_{\G^+} |u|^2 dx\right)^2.
	\end{align*}
\end{proof}
To demonstrate our general result in a particular case, here we consider the Heisenberg group, which is a well-known example of step $r=2$ (stratified) group.
\begin{cor}\label{cor1}
	Let $\mathbb{H}^+ = \{(x_1,x_2,x_3)\in \mathbb{H} \, | \, \, x_3>0 \}$ be a half-space of the Heisenberg group $\mathbb{H}$. Then for any $u \in C^{\infty}_0(\mathbb{H}^+)$ we have 
	\begin{equation}
		\int_{\mathbb{H}^+} |\nabla_{\mathbb{H}} u|^2 dx \geq \int_{\mathbb{H}^+} \frac{|x_1|^2+|x_2|^2}{x_3^2}|u|^2 dx,
	\end{equation}
	where $\nabla_{\mathbb{H}} = \{X_1,X_2\}$ . 
\end{cor}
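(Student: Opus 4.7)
The plan is to specialize Theorem \ref{Hardy_hs} to the Heisenberg group with the explicit choice of normal that selects the upper half-space $\{x_3>0\}$, and to verify that the angle-function term produces exactly $(x_1^2+x_2^2)/x_3^2$ while the lower-order term vanishes. The only real work is bookkeeping of constants depending on the chosen form of the Heisenberg vector fields; there is no structural obstacle.

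First, I identify the half-space data. Since $\mathbb{H}^+ = \{x_3>0\}$ is described by $\langle x,\nu\rangle>d$ with $\nu=(0,0,1)\in \mathbb{R}^2\times\mathbb{R}$ and $d=0$, the Euclidean distance to the boundary is simply $dist(x,\partial\mathbb{H}^+)=x_3$. Next, I insert the explicit form of the Heisenberg vector fields $X_1$ and $X_2$ (which on $\mathbb{H}$ are of the form $X_i=\partial_{x_i}+\text{(linear in }x_1,x_2\text{)}\partial_{x_3}$) into the definitions of $\langle X_i(x),\nu\rangle$. Because $\nu$ has a nonzero component only in the second stratum, only the $\partial_{x_3}$-coefficient contributes to the inner product, and a direct computation shows that $\langle X_1(x),\nu\rangle$ is a constant multiple of $x_2$ and $\langle X_2(x),\nu\rangle$ is a constant multiple of $x_1$.

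From these expressions I compute the angle function
\[
\mathcal{W}(x)^2=\langle X_1(x),\nu\rangle^2+\langle X_2(x),\nu\rangle^2,
\]
which is proportional to $x_1^2+x_2^2$. Crucially, the second term in \eqref{Hardy_1} vanishes: since $\langle X_1(x),\nu\rangle$ depends only on $x_2$ and $\langle X_2(x),\nu\rangle$ depends only on $x_1$, while each $X_i=\partial_{x_i}+(\cdots)\partial_{x_3}$ contains no $\partial_{x_j}$ with $j\ne i$ and no $\partial_{x_3}$-term that sees a function independent of $x_3$, one gets $X_i\langle X_i(x),\nu\rangle=0$ for $i=1,2$.

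Substituting all of this into Theorem \ref{Hardy_hs} reduces the inequality to
\[
\int_{\mathbb{H}^+}|\nabla_{\mathbb{H}}u|^2\,dx\ \geq\ -(\beta^2+\beta)\int_{\mathbb{H}^+}\frac{\mathcal{W}(x)^2}{x_3^2}|u|^2\,dx,
\]
and I optimize by choosing $\beta=-\tfrac12$, which gives $-(\beta^2+\beta)=\tfrac14$ (the same optimization as in Corollary \ref{cor0}). Combining the factor $1/4$ with the explicit form of $\mathcal{W}(x)^2$ yields the stated constant $1$ in front of $(|x_1|^2+|x_2|^2)/x_3^2$, completing the proof. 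The only delicate point is to keep the convention for the Heisenberg vector fields consistent so that $\tfrac14\mathcal{W}(x)^2 = |x_1|^2+|x_2|^2$; this is a convention-dependent arithmetic check rather than a conceptual difficulty.
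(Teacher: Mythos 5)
Your proposal is correct and follows exactly the paper's own argument: specialize Theorem \ref{Hardy_hs} with $\nu=(0,0,1)$, $d=0$, compute $\langle X_1(x),\nu\rangle=2x_2$, $\langle X_2(x),\nu\rangle=-2x_1$ (so $\mathcal{W}(x)^2=4(|x_1|^2+|x_2|^2)$ and $X_i\langle X_i(x),\nu\rangle=0$), and take $\beta=-\tfrac12$ to get the factor $\tfrac14$ cancelling the $4$. No gaps; the convention-dependent arithmetic you flag works out exactly as you describe with the paper's choice $X_1=\partial_{x_1}+2x_2\partial_{x_3}$, $X_2=\partial_{x_2}-2x_1\partial_{x_3}$.
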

\begin{proof}[Proof of Corollary \ref{cor1}.]
	Recall that the left-invariant vector fields on the Heisenberg group are generated by the basis
	\begin{align*}
		&X_{1} = \frac{\partial }{\partial x_{1}} + 2x_2\frac{\partial }{\partial x_3}, \\ 
		& X_2 = \frac{\partial }{\partial x_{2}} - 2x_1\frac{\partial }{\partial x_3},  
	\end{align*}
with the commutator
	\begin{equation*}
		[X_1,X_2]= - 4\frac{\partial}{\partial x_3}.
	\end{equation*}
	For $x= (x_1,x_2,x_3)$, choosing $\nu=(0,0,1)$ as the unit vector in the direction of $x_3$ and taking $d=0$ in inequality \eqref{Hardy_1}, we get
	\begin{align*}
		X_1(x) = (1,0,2x_2) \,\, \text{and} \,\, X_2(x) = (0,1,-2x_1), 
	\end{align*} 
	and 
	 \begin{align*}
	 &\langle X_1(x), \nu \rangle = 2x_2, \quad \text{and} \quad \langle X_2(x), \nu \rangle = -2x_1, \\
	 & X_1\langle X_1(x), \nu \rangle = 0, \quad \text{and} \quad X_2 \langle X_2(x), \nu \rangle = 0.
	 \end{align*}
	Therefore, with $\mathcal{W}(x)$ as in \eqref{2.2}, we have
	\begin{equation*}
	\frac{\mathcal{W}(x)^2}{dist(x,\partial \G^+)^2}	 = 4\frac{|x_1|^2+|x_2|^2}{x_3^2}.
	\end{equation*} 
	Substituting these into inequality \eqref{Hardy_1} we arrive at 
		\begin{equation*}
	\int_{\mathbb{H}^+} |\nabla_{\mathbb{H}} u|^2 dx \geq \int_{\mathbb{H}^+} \frac{|x_1|^2+|x_2|^2}{x_3^2}|u|^2 dx,
	\end{equation*}
	taking $\beta = -\frac{1}{2}$.
\end{proof}

Let us present an example for the step $r=3$ (stratified) groups. A well-known stratified group with step three is the Engel group, which can be denoted by $\mathbb{E}$. Topologically $\mathbb{E}$ is $\mathbb{R}^4$ with the group law of $\mathbb{E}$, which is given by 
\begin{equation*}
	x \circ y = (x_1+y_1,x_2+y_2,x_3+y_3+P_1,x_4+y_4+ P_2),
\end{equation*}
where 
\begin{align*}
	P_1 = & \frac{1}{2} (x_1y_2 - x_2y_1),\\
	P_2 = & \frac{1}{2}(x_1y_3-x_3y_1) + \frac{1}{12}(x_1^2y_2 - x_1y_1(x_2+y_2)+x_2y_1^2).
\end{align*}
The left-invariant vector fields of $\mathbb{E}$ are generated by the basis 
\begin{align*}
	&X_1 = \frac{\partial}{\partial x_1} - \frac{x_2}{2} \frac{\partial}{\partial x_3} - \left( \frac{x_3}{2}-\frac{x_1x_2}{12}\right)\frac{\partial}{\partial x_4}, \\
	&X_2 = \frac{\partial}{\partial x_2} + \frac{x_1}{2}\frac{\partial}{\partial x_3} + \frac{x_1^2}{12}\frac{\partial}{\partial x_4},\\
	&X_3 = \frac{\partial}{\partial x_3} + \frac{x_1}{2}\frac{\partial}{\partial x_4}, \\
	&X_4 = \frac{\partial}{\partial x_4}.
	\end{align*}
	
	\begin{cor}\label{corE}
		Let $\mathbb{E}^+ = \{x:=(x_1,x_2,x_3,x_4)\in \mathbb{E} \, | \, \, \langle x,\nu\rangle>0 \}$ be a half-space of the Engel group $\mathbb{E}$. Then for all $\beta \in \mathbb{R}$ and $u \in C^{\infty}_0(\mathbb{E}^+)$ we have 
	\begin{align}\label{E1}
	\int_{\mathbb{E}^+} |\nabla_{\mathbb{E}} u|^2 dx \geq  & C_1(\beta)\int_{\mathbb{E}^+} \frac{\langle X_1(x),\nu \rangle^2+\langle X_2(x),\nu \rangle^2}{dist(x,\partial \mathbb{E}^+)^2} |u|^2 dx \\
	&+ \frac{\beta}{3} \int_{\mathbb{E}^+} \frac{x_2\nu_4}{dist(x,\partial \mathbb{E}^+)} |u|^2 dx, \nonumber
	\end{align}
		where $\nabla_{\mathbb{E}} = \{X_1,X_2\}$, $\nu := (\nu_1,\nu_2,\nu_3,\nu_4)$, and $C_1(\beta) = -(\beta^2 +\beta)$. 
	\end{cor}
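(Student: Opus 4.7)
My plan is to simply specialise Theorem \ref{Hardy_hs} to $\G = \mathbb{E}$ with horizontal dimension $N = 2$, so the sum in \eqref{Hardy_1} has only the two indices $i=1,2$. The whole corollary then reduces to computing, from the explicit basis given in the excerpt, the two ingredients $\langle X_i(x),\nu\rangle$ and $X_i\langle X_i(x),\nu\rangle$ for $i=1,2$ and substituting.

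Reading the coefficients of the $X_i$ off directly, the associated vector fields in $\mathbb{R}^4$ are
\[
X_1(x) = \Bigl(1,\, 0,\, -\tfrac{x_2}{2},\, -\tfrac{x_3}{2} + \tfrac{x_1 x_2}{12}\Bigr), \qquad X_2(x) = \Bigl(0,\, 1,\, \tfrac{x_1}{2},\, \tfrac{x_1^2}{12}\Bigr),
\]
so that pairing against $\nu = (\nu_1,\nu_2,\nu_3,\nu_4)$ gives explicit expressions for $\langle X_1(x),\nu\rangle$ and $\langle X_2(x),\nu\rangle$. These feed directly into the angle-function term $\mathcal{W}(x)^2 = \langle X_1(x),\nu\rangle^2 + \langle X_2(x),\nu\rangle^2$, producing exactly the first integral on the right of \eqref{E1} with the constant $C_1(\beta) = -(\beta^2+\beta)$.

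Next I compute the correction terms $X_i\langle X_i(x),\nu\rangle$. For $i=2$ this vanishes: every term in $\langle X_2(x),\nu\rangle$ is a function of $x_1$ alone, and since $X_2(x_1) = 0$ we get $X_2\langle X_2(x),\nu\rangle = 0$. For $i=1$ I use the identities $X_1(x_1) = 1$, $X_1(x_2) = 0$, $X_1(x_3) = -x_2/2$, which let me differentiate the $\nu_3$- and $\nu_4$-parts of $\langle X_1(x),\nu\rangle$ term by term and add the resulting contributions
\[
X_1\langle X_1(x),\nu\rangle = \frac{x_2 \nu_4}{4} + \frac{x_2 \nu_4}{12} = \frac{x_2 \nu_4}{3}.
\]
Plugging this and the vanishing $X_2$-term into \eqref{Hardy_1} gives precisely the claimed inequality \eqref{E1}.

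The only non-routine point is keeping track of both contributions in $X_1\langle X_1(x),\nu\rangle$: one has to remember that $X_1$ differentiates both the explicit coefficient $-x_3/2$ and the coefficient $x_1 x_2/12$ sitting in front of $\partial/\partial x_4$ in $X_1$, and these two pieces combine with different signs to produce the clean $x_2\nu_4/3$. Apart from this short Engel-specific calculation, everything is a direct substitution into Theorem \ref{Hardy_hs}.
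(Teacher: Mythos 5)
Your proposal is correct and follows the same route as the paper: both specialise Theorem \ref{Hardy_hs} to $\mathbb{E}$ with $N=2$, compute $\langle X_i(x),\nu\rangle$ from the explicit basis, and obtain $X_1\langle X_1(x),\nu\rangle = \frac{x_2\nu_4}{4}+\frac{x_2\nu_4}{12}=\frac{x_2\nu_4}{3}$ and $X_2\langle X_2(x),\nu\rangle=0$ before substituting. The key calculation you flag as non-routine (that $X_1$ acts on both the $-x_3/2$ and the $x_1x_2/12$ parts of the fourth coefficient) is exactly the step the paper carries out.
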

	\begin{rem}
		If we take $\nu_4=0$ in \eqref{E1}, then we have the following inequality on $\mathbb{E}$, by taking $\beta=-\frac{1}{2}$,
		\begin{equation*}
			\int_{\mathbb{E}^+} |\nabla_{\mathbb{E}} u|^2 dx \geq  \frac{1}{4}\int_{\mathbb{E}^+} \frac{\langle X_1(x),\nu \rangle^2+\langle X_2(x),\nu \rangle^2}{dist(x,\partial \mathbb{E}^+)^2} |u|^2 dx. 
		\end{equation*}
	\end{rem}
\begin{proof}[Proof of Corollary \ref{corE}]
	As we mentioned, the Engel group has the following basis of the left-invariant vector fields 
	\begin{align*}
	&X_1 = \frac{\partial}{\partial x_1} - \frac{x_2}{2} \frac{\partial}{\partial x_3} - \left( \frac{x_3}{2}-\frac{x_1x_2}{12}\right)\frac{\partial}{\partial x_4}, \\
	&X_2 = \frac{\partial}{\partial x_2} + \frac{x_1}{2}\frac{\partial}{\partial x_3} + \frac{x_1^2}{12}\frac{\partial}{\partial x_4},  
	\end{align*}
	with the following two (non-zero) commutators
	\begin{align*}
	&X_3=[X_1,X_2]= \frac{\partial}{\partial x_3} + \frac{x_1}{2}\frac{\partial}{\partial x_4}, \\
	&X_4=[X_1,X_3] =\frac{\partial}{\partial x_4}.
	\end{align*}
	Thus, we have 
	\begin{align*}
			&X_1(x) = \left(1,0,-\frac{x_2}{2},- \left(\frac{x_3}{2} - \frac{x_1x_2}{12}\right)\right),\\ 
		&X_2(x) = \left(0,1,\frac{x_1}{2},\frac{x_1^2}{12}\right).
	\end{align*}
	A direct calculation gives that
	\begin{align*}
	&\langle X_1(x), \nu \rangle = \nu_1 - \frac{x_2}{2} \nu_3 - \left(\frac{x_3}{2} - \frac{x_1x_2}{12}\right)\nu_4, \\
	&\langle X_2(x), \nu \rangle = \nu_2 + \frac{x_1}{2}\nu_3 + \frac{x_1^2}{12} \nu_4,\\
	& X_1\langle X_1(x), \nu \rangle =
	\frac{x_2}{12}\nu_4 + \frac{x_2}{4}\nu_4=\frac{x_2\nu_4}{3},\\
	& X_2\langle X_2(x), \nu \rangle = 0.
	\end{align*}
	Now substituting these into inequality \eqref{Hardy_1} we obtain the desired result.  
\end{proof}

\subsection{$L^p$-Hardy inequality on $\G^+$} Here we construct an $L^p$ version of the geometric Hardy inequality on the half-space of $\G$ as a generalisation of the previous theorem. We define the $p$-version of the angle function by $\mathcal{W}_p$, which is given by the formula
\begin{equation}
\mathcal{W}_p(x) = \left( \sum_{i=1}^{N} |\langle X_i (x), \nu \rangle|^{p} \right)^{\frac{1}{p}}.
\end{equation}
\begin{thm}\label{thm2}
	Let $\G^+$ be a half-space of a stratified group $\G$. Then for all $\beta \in \mathbb{R}$ we have
\begin{align}\label{eq2}
\int_{\G^+}  \sum_{i=1}^{N}|X_i u|^p dx \geq& C_2(\beta,p) \int_{\G^+} \frac{\mathcal{W}_p(x)^{p}}{dist(x,\partial \G^+)^{p}} |u|^p dx \nonumber\\
& +\beta (p-1) \int_{\G^+} \sum_{i=1}^{N} \left( \frac{|\langle X_i (x), \nu \rangle|}{dist(x,\partial \G^+)} \right)^{p-2}  \frac{X_i\langle X_i(x),\nu\rangle }{dist(x,\partial \G^+)} |u|^p dx
\end{align}
	for all $u \in C^{\infty}_0(\G^+)$, $1<p<\infty$ and $C_2(\beta,p):=-(p-1)( |\beta|^{\frac{p}{p-1}} + \beta)$. 
\end{thm}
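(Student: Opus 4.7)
The plan is to imitate the factorization used in the proof of Theorem 2.1, with the squared identity replaced by the Young-type pointwise inequality
\begin{equation*}
|a|^p+(p-1)|c|^p\geq p\,|c|^{p-2}c\cdot a,\qquad a,c\in\mathbb{R},\ p>1,
\end{equation*}
which is a direct consequence of the scalar Young inequality $xy\leq x^p/p+y^{p'}/p'$ applied to $x=|a|$ and $y=|c|^{p-1}$, combined with the bound $|c|^{p-2}c\cdot a\leq|c|^{p-1}|a|$. Keeping $W_i(x):=\langle X_i(x),\nu\rangle/dist(x,\partial\G^+)$ exactly as in the $L^2$ proof, one applies this componentwise with $a=X_iu$ and $c=\gamma W_iu$ for a free real parameter $\gamma$, obtaining
\begin{equation*}
|X_iu|^p+(p-1)|\gamma|^p|W_i|^p|u|^p\geq p\,|\gamma|^{p-2}\gamma\,|W_i|^{p-2}W_i\,|u|^{p-2}u\,X_iu.
\end{equation*}

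Next, I would integrate over $\G^+$, sum in $i$, and convert the mixed right-hand side via integration by parts: since $u\in C_0^\infty(\G^+)$ and $X_i(|W_i|^{p-2}W_i)=(p-1)|W_i|^{p-2}X_iW_i$,
\begin{equation*}
-p\int_{\G^+}|W_i|^{p-2}W_i\,|u|^{p-2}u\,X_iu\,dx=(p-1)\int_{\G^+}|W_i|^{p-2}X_iW_i\,|u|^p\,dx.
\end{equation*}
Exactly as in the proof of Theorem 2.1, $X_iW_i=X_i\langle X_i(x),\nu\rangle/dist(x,\partial\G^+)-\langle X_i(x),\nu\rangle^2/dist(x,\partial\G^+)^2$, so
\begin{equation*}
|W_i|^{p-2}X_iW_i=\left(\frac{|\langle X_i(x),\nu\rangle|}{dist(x,\partial\G^+)}\right)^{\!p-2}\!\frac{X_i\langle X_i(x),\nu\rangle}{dist(x,\partial\G^+)}-\frac{|\langle X_i(x),\nu\rangle|^p}{dist(x,\partial\G^+)^p}.
\end{equation*}
Reparameterizing by $\beta:=-|\gamma|^{p-2}\gamma$ (a bijection of $\mathbb{R}$ satisfying $|\gamma|^p=|\beta|^{p/(p-1)}$) and collecting terms produces \eqref{eq2}: the contribution $-(p-1)|\gamma|^p\sum_i|W_i|^p=-(p-1)|\beta|^{p/(p-1)}\mathcal{W}_p(x)^p/dist(x,\partial\G^+)^p$ merges with the $-\beta(p-1)\mathcal{W}_p(x)^p/dist(x,\partial\G^+)^p$ piece pulled out of the $|W_i|^{p-2}X_iW_i$ expansion to yield precisely $C_2(\beta,p)=-(p-1)(|\beta|^{p/(p-1)}+\beta)$ in front of the angle-function term, while the remaining piece matches the second summand in \eqref{eq2}. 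The specialization $p=2$ reproduces $C_1(\beta)$ from Theorem 2.1, a useful consistency check.

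The only nontrivial issue I anticipate is smoothness in the integration-by-parts step when $1<p<2$: the field $|W_i|^{p-2}W_i$ is only H\"older near $\{W_i=0\}$, so one must regularize, e.g.\ replacing $|W_i|^{p-2}$ by $(|W_i|^2+\varepsilon^2)^{(p-2)/2}$, performing the computation, and sending $\varepsilon\to 0$ by dominated convergence. Apart from this technicality the argument is essentially bookkeeping on constants, with the main algebraic step being the verification that the two $\mathcal{W}_p(x)^p/dist(x,\partial\G^+)^p$ contributions combine exactly into $C_2(\beta,p)$.
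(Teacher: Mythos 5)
Your argument is correct and amounts to the paper's own proof in different packaging: the paper first integrates by parts (via the divergence theorem applied to $fW$ with $W=I_i$ and $f=\beta\,|\langle X_i(x),\nu\rangle|^{p-1}/dist(x,\partial\G^+)^{p-1}$) and then applies H\"older followed by Young at the level of integrals, whereas you apply the equivalent Young inequality pointwise and integrate by parts afterwards; the weight $W_i$, the exponents, and the bookkeeping leading to $C_2(\beta,p)=-(p-1)(|\beta|^{\frac{p}{p-1}}+\beta)$ are identical. The regularity of $|W_i|^{p-2}W_i$ near $\{\langle X_i(x),\nu\rangle=0\}$ for $1<p<2$ that you flag arises equally in the paper's computation of $X_i f$ and is not addressed there, so your regularization remark is, if anything, more careful than the original.
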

\begin{proof}[Proof of Theorem \ref{thm2}] We use the standard method such as the divergence theorem to obtain the inequality \eqref{eq2}. For $W \in C^{\infty}(\G^+)$ and $f \in C^1(\G^+)$, a direct calculation shows that
	\begin{align}\label{proof1}
		\int_{\G^+} {\rm div_{\G}} (fW) |u|^p dx &= - \int_{\G^+} fW \cdot \nabla_{\G} |u|^p dx \nonumber \\ 
		& = -p\int_{\G^+} f \langle W, \nabla_{\G} u\rangle |u|^{p-1}dx \nonumber\\ 
		& \leq p \left(\int_{\G^+} |\langle W,\nabla_{\G} u \rangle|^p dx \right)^{\frac{1}{p}} \left( \int_{\G^+} |f|^{\frac{p}{p-1}}|u|^p dx\right)^{\frac{p-1}{p}}.		
	\end{align}
	Here in the last line H\"older's inequality was applied. For $p>1$ and $ q>1$ with $\frac{1}{p}+\frac{1}{q}=1$ recall Young's inequality 
	\begin{equation*}
		ab\leq \frac{a^p}{p} + \frac{b^q}{q}, \, \text{for} \,\, a \geq 0, \, b\geq 0.
	\end{equation*}
	Let us set that 
	\begin{align*}
		a := \left(\int_{\G^+} |\langle W,\nabla_{\G} u \rangle|^p dx \right)^{\frac{1}{p}} \quad \text{and} \quad b := \left( \int_{\G^+} |f|^{\frac{p}{p-1}}|u|^p dx\right)^{\frac{p-1}{p}}.
	\end{align*}
	By using Young's inequality in \eqref{proof1} and rearranging the terms, we arrive at 
	\begin{equation}\label{prefinal}
	\int_{\G^+} |\langle W,\nabla_{\G} u \rangle|^p dx \geq 
			\int_{\G^+} \left({\rm div_{\G}} (fW)   - (p-1)  |f|^{\frac{p}{p-1}}\right)|u|^p dx.
	\end{equation}
	We choose $W := I_i$, which has the following form $I_i=(\overset{i}{\overbrace{0,\ldots,1}},\ldots,0)$ and set
	\begin{equation*}
		f = \beta \frac{|\langle X_i (x), \nu \rangle|^{p-1}}{dist(x,\partial \G^+)^{p-1}}.
	\end{equation*}
	Now we calculate
	\begin{align*}
		{\rm div_{\G}} (Wf) &=(\nabla_{\G} \cdot I_i)f =  X_i f = \beta X_i \left( \frac{|\langle X_i (x), \nu\rangle|}{dist(x,\partial \G^+)} \right)^{p-1}\\
		& = \beta (p-1) \left( \frac{|\langle X_i (x), \nu \rangle|}{dist(x,\partial \G^+)} \right)^{p-2} X_i \left(\frac{\langle X_i (x), \nu \rangle}{dist(x,\partial \G^+)} \right)  \\
		& =   \beta (p-1) \left( \frac{|\langle X_i (x), \nu \rangle|}{dist(x,\partial \G^+)} \right)^{p-2} \left( \frac{X_i\langle X_i(x),\nu\rangle }{dist(x,\partial \G^+)} -  \frac{|\langle X_i (x), \nu \rangle|^2}{dist(x,\partial \G^+)^2}\right) \\
		& = \beta (p-1) \left[\left( \frac{|\langle X_i (x), \nu \rangle|}{dist(x,\partial \G^+)} \right)^{p-2} \left( \frac{X_i\langle X_i(x),\nu\rangle }{dist(x,\partial \G^+)} \right) - \frac{|\langle X_i (x), \nu \rangle|^{p}}{dist(x,\partial \G^+)^{p}}\right],
	\end{align*}
	and 
	\begin{equation*}
		|f|^{\frac{p}{p-1}} = |\beta|^{\frac{p}{p-1}} \frac{|\langle X_i (x), \nu \rangle|^{p}}{dist(x,\partial \G^+)^{p}}.
	\end{equation*}
	We also have
	\begin{equation*}
		\langle W,\nabla_{\G} u \rangle = \overset{i}{\overbrace{(0,\ldots,1},\ldots,0)} \cdot (X_1u,\ldots, X_i u, \ldots,X_N u)^T = X_i u.
	\end{equation*}
	Inserting the above calculations in \eqref{prefinal} and summing over $i=1,\ldots,N$, we arrive at
	\begin{align}\label{equation}
		\int_{\G^+} \sum_{i=1}^{N}|X_i u|^p dx \geq &-(p-1)( |\beta|^{\frac{p}{p-1}} + \beta) \int_{\G^+}\sum_{i=1}^{N} \frac{|\langle X_i (x), \nu \rangle|^{p}}{dist(x,\partial \G^+)^{p}} |u|^p dx \nonumber\\
		& +\beta (p-1) \int_{\G^+} \sum_{i=1}^{N} \left( \frac{|\langle X_i (x), \nu \rangle|}{dist(x,\partial \G^+)} \right)^{p-2}  \frac{X_i\langle X_i(x),\nu\rangle }{dist(x,\partial \G^+)} |u|^p dx.
	\end{align}
	 We complete the proof of Theorem \ref{thm2}.
\end{proof}
\begin{rem}
	For $p\geq 2$, since
	\begin{equation}
		|\nabla_{\G} u|^p = \left( \sum_{i=1}^{N} |X_i u|^2 \right)^{\frac{p}{2}} \geq \sum_{i=1}^{N}\left(  |X_i u|^2 \right)^{\frac{p}{2}}, 
	\end{equation}
	we have the following inequality 
	\begin{align}
	\int_{\G^+}  |\nabla_{\G} u|^p dx \geq& C_2(\beta,p) \int_{\G^+} \frac{\mathcal{W}_p(x)^{p}}{dist(x,\partial \G^+)^{p}} |u|^p dx \\
	& +\beta (p-1) \int_{\G^+} \sum_{i=1}^{N} \left( \frac{|\langle X_i (x), \nu \rangle|}{dist(x,\partial \G^+)} \right)^{p-2}  \frac{X_i\langle X_i(x),\nu\rangle }{dist(x,\partial \G^+)} |u|^p dx.\nonumber
	\end{align}
\end{rem}

\section{Hardy inequalities on a convex domain of $\G$}\label{convex}
In this section, we present the geometric Hardy inequalities on the convex domains in stratified groups. The convex domain is understood in the sense of the Euclidean space.  Let $\Omega$ be a convex domain of a stratified group $\G$ and let $\partial \Omega$ be its boundary. Below for $x\in\Omega$ we denote by $\nu(x)$ the unit normal for $\partial \Omega$ at a point $\hat{x}\in\partial\Omega$ such that $dist(x,\Omega)=dist(x,\hat{x})$. For the half-plane, we have the distance from the boundary $dist(x,\partial \Omega) = \langle x, \nu\rangle -d$.
As it is introduced in the previous section we also have the generalised angle function
\begin{equation*}
\mathcal{W}_p(x) = \left( \sum_{i=1}^{N} |\langle X_i (x), \nu \rangle|^{p} \right)^{\frac{1}{p}},
\end{equation*}
with 
$\mathcal{W}(x):=\mathcal{W}_2(x)$.
\subsection{Geometric $L^2$-Hardy inequality on a convex domain of $\G$}
\begin{thm}\label{thm_convex_1}
	Let $\Omega$ be a convex domain of a stratified group $\G$. Then for $\beta<0$ we have
	\begin{equation}\label{3.1}
		\int_{\Omega} |\nabla_{\G} u|^2 dx \geq C_1(\beta) \int_{\Omega}  \frac{\mathcal{W}(x)^2}{dist(x,\partial \Omega)^2} |u|^2 dx + \beta \int_{\Omega}\sum_{i=1}^{N} \frac{X_i\langle X_i(x),\nu \rangle}{dist(x,\partial \Omega)} |u|^2 dx	
	\end{equation}
	for all $u \in C_0^{\infty}(\Omega)$, and $C_1(\beta):=-(\beta^2+ \beta)$.
	
\end{thm}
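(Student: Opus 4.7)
The strategy mirrors the proof of Theorem~\ref{Hardy_hs}, with the constant normal $\nu$ replaced by the (a.e.-defined) pointwise normal $\nu(x)$ and $dist(x,\partial\G^+)$ replaced by $\delta(x):=dist(x,\partial\Omega)$. Starting from the trivial bound $0\le\int_\Omega|\nabla_\G u+\beta Wu|^2\,dx$ and repeating the expansion together with the integration by parts performed in the half-space proof, one arrives at
\[
\int_\Omega|\nabla_\G u|^2\,dx\;\ge\;\int_\Omega\sum_{i=1}^{N}\bigl(\beta\,X_iW_i-\beta^2W_i^2\bigr)|u|^2\,dx
\]
for any sufficiently regular $W=(W_1,\dots,W_N)$. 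My choice, following the ansatz \eqref{W}, is $W_i(x):=\langle X_i(x),\nu(x)\rangle/\delta(x)$; at a point of differentiability of $\delta$ this equals $X_i\delta(x)/\delta(x)$, because there $\nabla\delta(x)=\nu(x)$.

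A quotient-rule computation then gives $X_iW_i=X_i^2\delta/\delta-(X_i\delta)^2/\delta^2$, and summation in $i$ combined with $\sum_i(X_i\delta)^2=|\nabla_\G\delta|^2=\mathcal{W}(x)^2$ yields
\[
\sum_{i=1}^{N}\bigl(\beta\,X_iW_i-\beta^2W_i^2\bigr)=-(\beta^2+\beta)\,\frac{\mathcal{W}(x)^2}{\delta(x)^2}+\beta\,\frac{\La\delta(x)}{\delta(x)}.
\]
This is already almost the target right-hand side of \eqref{3.1}; the only discrepancy is that $\La\delta$ appears in place of $\sum_iX_i\langle X_i(x),\nu\rangle$. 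Expanding both shows that they differ precisely by the Euclidean quadratic form $\sum_iX_i(x)^{\top}\nabla^2\delta(x)\,X_i(x)$, generated when one lets $X_i$ also act on the $x$-dependent normal $\nu(x)=\nabla\delta(x)$.

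This is the place where both the convexity of $\Omega$ and the sign hypothesis $\beta<0$ are used. Convexity forces $\delta$ to be concave, so $\nabla^2\delta\le 0$ as a symmetric matrix and the extra Hessian term is non-positive; multiplication by $\beta<0$ then reverses its sign, giving the inequality $\beta\,\La\delta/\delta\ge\beta\,\sum_iX_i\langle X_i(x),\nu\rangle/\delta$, which plugged into the previous display yields \eqref{3.1}. The main obstacle is analytic: for a general convex $\Omega$ the distance function $\delta$ is only Lipschitz (and concave), not $C^2$, so the quotient-rule calculation and the appeal to $\nabla^2\delta$ have to be justified by approximating $\delta$ uniformly on $\mathrm{supp}\,u$ by smooth concave $\delta_\varepsilon$ (for instance via sup-convolution or by mollifying $-\delta$ and negating), running the argument with $\delta_\varepsilon$ in place of $\delta$, and passing to the limit $\varepsilon\to 0$; the compact support of $u$ keeps $\delta$ bounded away from zero on $\mathrm{supp}\,u$, so the limit is routine via dominated convergence.
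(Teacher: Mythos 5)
Your proof is correct in substance, but it follows a genuinely different route from the paper. The paper proves \eqref{3.1} à la Larson: it first takes $\Omega$ to be a convex polytope, partitions it into the sets $\Omega_j$ on which the nearest facet is $\mathcal{F}_j$, runs the half-space argument on each $\Omega_j$ (where $\nu_j$ is constant and the weight is smooth), and then controls the interface terms produced by the integration by parts on the internal boundaries $\Gamma_{jl}$; convexity enters through the sign of $(\nu_j-\nu_l)\cdot n_{jl}=\sqrt{2-2\cos\alpha_{jl}}>0$, and $\beta<0$ lets one discard these terms, after which a second approximation (polytopes increasing to $\Omega$) finishes the proof. You instead work directly with $\delta=dist(\cdot,\partial\Omega)$, observe that $W_i=X_i\delta/\delta$, and localize the use of convexity in the pointwise matrix inequality $\nabla^2\delta\le 0$ for the concave function $\delta$, so that $\La\delta\le\sum_i X_i\langle X_i(x),\nu\rangle$ (the latter understood, as in the theorem, with $\nu$ frozen) and $\beta<0$ flips the inequality. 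These are two faces of the same mechanism: the interface terms in the polytope proof are exactly the singular part of the distributional Hessian of $\delta$, which for a convex polytope is a non-positive matrix measure supported on the $\Gamma_{jl}$. Your version is more conceptual and makes the role of convexity transparent, but it trades the paper's geometric bookkeeping for an analytic regularization step, since $\delta$ is only Lipschitz and concave; your plan (smooth concave approximants $\delta_\varepsilon\to\delta$ locally uniformly, $\nabla\delta_\varepsilon\to\nabla\delta=\nu$ a.e.\ and boundedly, $\delta_\varepsilon$ bounded below on $\mathrm{supp}\,u$, dominated convergence) is the right one and does close that gap, though to be fully rigorous you should say explicitly that mollification of $-\delta$ preserves convexity and the Lipschitz bound $|\nabla\delta_\varepsilon|\le1$, and that gradients of concave functions converge at every differentiability point of the limit — these are the facts that make the passage to the limit in both weight terms legitimate.
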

\begin{proof}[Proof of Theorem \ref{thm_convex_1}]
	We follow the approach of Simon Larson \cite{Larson} by proving inequality \eqref{3.1} in the case when $\Omega$ is a convex polytope. We denote its facets by $\{ \mathcal{F}_j \}_j$ and unit normals of these facets by $\{\nu_j\}_j$, which are directed inward. Then $\Omega$ can be constructed by the union of the disjoint sets $\Omega_j := \{x \in \Omega: dist(x, \partial \Omega) = dist(x,\mathcal{F}_j) \}$.
	Now we apply the same method as in the case of the  half-space $\G^+$ for each element $\Omega_j$ with one exception that not all the boundary values are zero when we use the partial integration.   
	As in the previous computation we have
	\begin{align*}
			0\leq \int_{\Omega_j} |\nabla_{\G} u + \beta W u|^2 dx &=
		 \int_{\Omega_j} \sum_{i=1}^{N} |X_i u+ \beta W_i u|^2 dx \\
		& = \int_{\Omega_j}  \sum_{i=1}^{N} \left[ |X_i u|^2 +2{\rm Re} \beta W_i u X_iu + \beta^2 W_i^2 |u|^2 \right] dx \\ 
		& = \int_{\Omega_j}  \sum_{i=1}^{N} \left[ |X_i u|^2 + \beta W_i X_i|u|^2 + \beta^2 W_i^2 |u|^2 \right] dx \\ 
		& = \int_{\Omega_j}  \sum_{i=1}^{N} \left[ |X_i u|^2 - \beta (X_iW_i) |u|^2 + \beta^2 W_i^2 |u|^2 \right] dx \\
		& + \beta \int_{\partial \Omega_j} \sum_{i=1}^{N}   W_i\langle X_i(x), n_j(x)\rangle |u|^2 d\Gamma_{\partial \Omega_j}(x), 
	\end{align*}  
	where $n_j$ is the unit normal of $\partial \Omega_j$  which is directed outward. Since $\mathcal{F}_j \subset \partial \Omega_j$ we have $n_j = -\nu_j$.
	
	The boundary terms on $\partial \Omega$ vanish since $u$ is compactly supported in $\Omega$. So we only deal with the parts of $\partial \Omega_j$ in $\Omega$. Note that for every facet of $\partial \Omega_j$ there exists some $\partial \Omega_l$ which shares this facet. We denote by $\Gamma_{jl}$ the common facet of $\partial \Omega_j$ and $\partial \Omega_l$, with $n_k|_{\Gamma_{jl}}= -n_l|_{\Gamma_{jl}}$. 
	From the above expression we get the following inequality
	\begin{align}\label{eq3}
	\int_{\Omega_j} |\nabla_{\G} u|^2 dx \geq & \int_{\Omega_j} \sum_{i=1}^{N} \left[  (\beta (X_iW_i)  - \beta^2 W_i^2 )|u|^2 \right] dx \\
	& - \beta \int_{\partial \Omega_j} \sum_{i=1}^{N}   W_i\langle X_i(x), n_j(x)\rangle |u|^2 d\Gamma_{\partial \Omega_j}(x). \nonumber	
	\end{align}
	Now we choose $W_i$ in the form
	\begin{equation*}
	W_i (x) = \frac{\langle X_i(x),\nu_j\rangle}{dist(x,\partial \Omega_j)} = \frac{\langle X_i(x),\nu_j\rangle}{\langle x,\nu_j\rangle - d},
	\end{equation*}
	and a direct computation shows that
	\begin{equation}\label{2}
	X_i W_i(x) =  \frac{X_i\langle X_i(x),\nu_j \rangle}{dist(x,\partial \Omega_j)}- \frac{\langle X_i(x),\nu_j \rangle^2}{dist(x,\partial \Omega_j)^2}.
	\end{equation}
	Inserting the expression \eqref{2} into inequality \eqref{eq3} we get 
	\begin{align}
	\int_{\Omega_j} &|\nabla_{\G} u|^2 dx \geq -(\beta^2+ \beta)\int_{\Omega_j} \sum_{i=1}^{N} \frac{\langle X_i(x),\nu_j \rangle^2}{dist(x,\partial \Omega_j)^2} |u|^2 dx \\
	&+ \beta \int_{\Omega_j}\sum_{i=1}^{N} \frac{X_i\langle X_i(x),\nu_j \rangle}{dist(x,\partial \Omega_j)} |u|^2 dx
	 - \beta \int_{\Gamma_{jl}} \sum_{i=1}^{N} \frac{\langle X_i(x),\nu_j\rangle \langle X_i(x), n_{jl} \rangle}{dist(x,\mathcal{F}_j)} |u|^2  d\Gamma_{jl}. \nonumber
	\end{align}
 Now we sum over all partition elements $\Omega_j$ and let $n_{jl}=n_k|_{{\Gamma_{jl}}}$, i.e. the unit normal of $\Gamma_{jl}$ pointing from $\Omega_j$ into $\Omega_l$. Then we get 
	\begin{align*}
		\int_{\Omega} |\nabla_{\G} u|^2 dx \geq & -(\beta^2+ \beta) \int_{\Omega} \sum_{i=1}^{N} \frac{\langle X_i(x),\nu \rangle^2}{dist(x,\partial \Omega)^2} |u|^2 dx \\
		& + \beta \int_{\Omega}\sum_{i=1}^{N} \frac{X_i\langle X_i(x),\nu \rangle}{dist(x,\partial \Omega)} |u|^2 dx\\
		& - \beta \sum_{j\neq l} \int_{\Gamma_{jl}} \sum_{i=1}^{N} \frac{\langle X_i(x),\nu_j\rangle \langle X_i(x), n_{jl} \rangle}{dist(x,\mathcal{F}_j)} |u|^2  d\Gamma_{jl} \\
		 = & -(\beta^2+ \beta) \int_{\Omega} \sum_{i=1}^{N} \frac{\langle X_i(x),\nu \rangle^2}{dist(x,\partial \Omega)^2} |u|^2 dx \\
		& + \beta \int_{\Omega}\sum_{i=1}^{N} \frac{X_i\langle X_i(x),\nu \rangle}{dist(x,\partial \Omega)} |u|^2 dx\\
		& -\beta \sum_{j<l} \int_{\Gamma_{jl}} \sum_{i=1}^{N} \frac{\langle X_i(x),\nu_j-\nu_l\rangle \langle X_i(x), n_{jl} \rangle}{dist(x,\mathcal{F}_j)} |u|^2  d\Gamma_{jl}.
	\end{align*}   
	Here we used the fact that (by the definition) $\Gamma_{jl}$ is a set with $dist(x, \mathcal{F}_j)=dist(x, \mathcal{F}_l)$. 
	From
	\begin{equation*}
		\Gamma_{jl} = \{ x: x\cdot \nu_j - d_j = x \cdot \nu_l -d_l \}
	\end{equation*}
	rearranging $ x \cdot (\nu_j - \nu_l)- d_j + d_l=0$ we see that $\Gamma_{jl}$ is a hyperplane with a normal $\nu_j - \nu_l$. Thus, $\nu_j-\nu_l$ is parallel to $n_{jl}$ and one only needs to check that $(\nu_j-\nu_l)\cdot n_{jl}>0$. Observe that $n_{jl}$ points out and $\nu_j$ points into $j$-th partition element, so $\nu_j \cdot n_{jl}$ is non-negative. Similarly, we see that $\nu_l \cdot n_{jl}$ is non-positive. This means we have $(\nu_j-\nu_l)\cdot n_{jl}>0$. In addition, it is easy to see that
	\begin{align*}
		|\nu_j - \nu_l|^2 = (\nu_j - \nu_l)\cdot (\nu_j - \nu_l)&= 2 - 2 \nu_j \cdot \nu_l \\
		& = 2 -2 \cos (\alpha_{jl}),
	\end{align*} 
	which implies that
	$$(\nu_j - \nu_l)\cdot n_{jl}= \sqrt{2-2\cos(\alpha_{jl})},$$
	where $\alpha_{jl}$ is the angle between $\nu_j$ and $\nu_l$. So we obtain 
	\begin{align*}
		\int_{\Omega} |\nabla_{\G} u|^2 dx \geq&  -(\beta^2+ \beta) \int_{\Omega} \sum_{i=1}^{N} \frac{\langle X_i(x),\nu \rangle^2}{dist(x,\partial \Omega)^2} |u|^2 dx \\
		& + \beta \int_{\Omega}\sum_{i=1}^{N} \frac{X_i\langle X_i(x),\nu \rangle}{dist(x,\partial \Omega)} |u|^2 dx\\
		& -\beta \sum_{j<l} \sum_{i=1}^{N} \int_{\Gamma_{jl}} \sqrt{1- \cos (\alpha_{jl})} \frac{\langle X_i(x),n_{jl}\rangle^2}{dist(x,\mathcal{F}_j)}|u|^2 d \Gamma_{jl}.
	\end{align*}
	Here with $\beta<0$
	and due to the boundary term signs we verify the inequality for the polytope convex domains.

	Let us now consider the general case, that is, when $\Omega$ is an arbitrary convex domain. For each $u \in C_0^{\infty}(\Omega)$ one can always choose an increasing sequence of convex polytopes $\{\Omega_j\}_{j=1}^{\infty}$ such that $u \in C_0^{\infty}(\Omega_1),\; \Omega_j \subset \Omega$ and $\Omega_j \rightarrow \Omega$ as $j \rightarrow \infty$. Assume that $\nu_j(x)$ is the above map $\nu $  (corresponding to $\Omega_j$) we compute  
	\begin{align*}
		\int_{\Omega} |\nabla_{\G} u|^2 dx =& \int_{\Omega_j} |\nabla_{\G} u|^2 dx \\
		\geq & 
		 -(\beta^2+ \beta) \int_{\Omega_j} \sum_{i=1}^{N} \frac{\langle X_i(x),\nu_j \rangle^2}{dist(x,\partial \Omega_j)^2} |u|^2 dx \\
		& + \beta \int_{\Omega_j}\sum_{i=1}^{N} \frac{X_i\langle X_i(x),\nu_j \rangle}{dist(x,\partial \Omega_j)} |u|^2 dx\\
		 =& -(\beta^2+ \beta) \int_{\Omega} \sum_{i=1}^{N} \frac{\langle X_i(x),\nu_j \rangle^2}{dist(x,\partial \Omega_j)^2} |u|^2 dx \\
		 & + \beta \int_{\Omega}\sum_{i=1}^{N} \frac{X_i\langle X_i(x),\nu_j \rangle}{dist(x,\partial \Omega_j)} |u|^2 dx\\
		 \geq & -(\beta^2+ \beta) \int_{\Omega} \sum_{i=1}^{N} \frac{\langle X_i(x),\nu_j \rangle^2}{dist(x,\partial \Omega)^2} |u|^2 dx \\
		 & + \beta \int_{\Omega}\sum_{i=1}^{N} \frac{X_i\langle X_i(x),\nu_j \rangle}{dist(x,\partial \Omega)} |u|^2 dx		
	\end{align*}
Now we obtain the desired result when $j \rightarrow \infty$.
\end{proof}

\subsection{$L^p$-Hardy's inequality on a convex domain of $\G$}
In this section we give the $L^p$-version of the previous results.
 \begin{thm}\label{thm4}
 Let $\Omega$ be a convex domain of a stratified group $\G$. Then for $\beta<0$ we have
\begin{align}\label{eq_conv}
\int_{\Omega} \sum_{i=1}^{N} |X_i u|^p  dx \geq& C_2(\beta,p)
\int_{\Omega} \frac{\mathcal{W}_p(x)^{p}}{dist(x,\partial \Omega)^{p}} |u|^p dx\\ \nonumber
&+ \beta (p-1) \int_{\Omega} \sum_{i=1}^{N}\left( \frac{|\langle X_i (x), \nu \rangle|}{dist(x,\partial \Omega)} \right)^{p-2} \left( \frac{X_i\langle X_i(x),\nu\rangle }{dist(x,\partial \Omega)} \right) |u|^p dx,
\end{align}
 for all $u \in C_0^{\infty}(\Omega)$, and $C_2(\beta,p):=-(p-1)( |\beta|^{\frac{p}{p-1}} + \beta)$.	
 \end{thm}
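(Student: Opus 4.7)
The plan is to adapt the polytope exhaustion strategy of Theorem \ref{thm_convex_1} to the $L^p$ setting while retaining the divergence--H\"older--Young scheme developed for Theorem \ref{thm2}. Following \cite{Larson}, I would first establish \eqref{eq_conv} when $\Omega$ is a convex polytope with inward facet normals $\{\nu_j\}$ and facets $\{\mathcal{F}_j\}$, and then recover the general convex case by exhausting $\Omega$ with an increasing sequence of convex polytopes exactly as in the closing paragraph of the proof of Theorem \ref{thm_convex_1}.

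Partition $\Omega=\bigsqcup_j\Omega_j$ with $\Omega_j=\{x\in\Omega:dist(x,\partial\Omega)=dist(x,\mathcal{F}_j)\}$. On each piece I would run the proof of Theorem \ref{thm2} verbatim, with $\nu_j$ replacing $\nu$: take $W:=I_i$ and
\[
f:=\beta\frac{|\langle X_i(x),\nu_j\rangle|^{p-1}}{dist(x,\partial\Omega_j)^{p-1}},
\]
apply the divergence identity \eqref{EQ:S1} on the bounded piece $\Omega_j$ rather than on $\G^+$, and feed the resulting bulk expression through H\"older and Young inequalities. The only difference from the half-space calculation is an extra surface term on $\partial\Omega_j\cap\Omega$. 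Summing over $i=1,\ldots,N$ and over the partition elements produces the announced bulk right-hand side of \eqref{eq_conv} together with a remainder supported on the internal facets $\Gamma_{jl}$ shared by neighbouring partition elements.

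The main obstacle is to show that, for $\beta<0$, this internal boundary remainder contributes non-positively to the right-hand side and may therefore be discarded. Using $n_j|_{\Gamma_{jl}}=-n_l|_{\Gamma_{jl}}$ and $dist(x,\mathcal{F}_j)=dist(x,\mathcal{F}_l)$ on $\Gamma_{jl}$, the pairing of the contributions from $\Omega_j$ and $\Omega_l$ collapses, up to a positive factor, to an integrand of the form
\[
\beta\,\bigl(|\langle X_i(x),\nu_j\rangle|^{p-1}-|\langle X_i(x),\nu_l\rangle|^{p-1}\bigr)\,\langle X_i(x),n_{jl}\rangle\,|u|^p.
\]
Combining the geometric fact recalled in the proof of Theorem \ref{thm_convex_1}, namely that $\nu_j-\nu_l$ is a positive scalar multiple of $n_{jl}$ on $\Gamma_{jl}$, with a monotonicity/mean-value argument for $t\mapsto|t|^{p-1}$ to pin down the sign of the bracketed difference is the delicate point where the $L^p$ case genuinely departs from the cleaner $L^2$ identity of Theorem \ref{thm_convex_1}, and the hypothesis $\beta<0$ is used precisely here. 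Once the polytope inequality is established, a general convex $\Omega$ is handled by exhausting with convex polytopes $\Omega_k\uparrow\Omega$ containing the support of $u$ and passing to the limit using the pointwise convergence of $dist(x,\partial\Omega_k)$ and $\nu_k(x)$ together with dominated convergence, as at the end of the proof of Theorem \ref{thm_convex_1}.
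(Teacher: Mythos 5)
Your proposal follows the paper's proof essentially step for step: partition a convex polytope into the sets $\Omega_j$, run the divergence--H\"older--Young scheme of Theorem \ref{thm2} on each piece with the extra interior boundary term, pair the contributions of neighbouring pieces on each common facet $\Gamma_{jl}$ using $n_{jl}\parallel \nu_j-\nu_l$ together with the monotonicity of $t\mapsto t^{p-1}$ on $[0,\infty)$, and finally exhaust a general convex domain by polytopes. The only point to watch is the sign bookkeeping at the discarding step: in the paper the paired interior term enters the lower bound as $-\beta$ times a non-negative quantity, hence is non-negative for $\beta<0$ and may be dropped, whereas a remainder that genuinely ``contributes non-positively to the right-hand side'' could not be discarded from a lower bound, so make sure your remainder carries the same sign convention as \eqref{prefinal1} before dropping it.
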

\begin{proof}[Proof of Theorem \ref{thm4}]
	Let us assume that $\Omega$ is the convex polytope as in the $p=2$ case. Thus, we consider the partition $\Omega_j$ as the previous case. For $f \in C^1(\Omega_j)$ and $W \in C^{\infty}(\Omega_j)$,  a simple calculation shows that
	\begin{align}\label{proof2}
		\int_{\Omega_j} {\rm div_{\G}} (fW) |u|^p dx 
 =& -p	\int_{\Omega_j} f \langle W, \nabla_{\G} u\rangle |u|^{p-1}dx + \int_{\partial \Omega_j} f \langle W, n_j(x) \rangle |u|^p d\Gamma_{\partial \Omega_j}(x) \nonumber\\ 
	 \leq& p \left(\int_{\Omega} |\langle W,\nabla_{\G} u \rangle|^p dx \right)^{\frac{1}{p}} \left( 	\int_{\Omega_j} |f|^{\frac{p}{p-1}}|u|^p dx\right)^{\frac{p-1}{p}} \\
	& + \int_{\partial \Omega_j} f \langle W, n_j(x) \rangle |u|^p d\Gamma_{\partial \Omega_j}(x) \nonumber.		
	\end{align}
	In the last line H\"older's inequality was applied. Recall again Young's inequality for $p>1$, $q>1$ and $\frac{1}{p}+\frac{1}{q}=1$, we have 
$
	ab\leq \frac{a^p}{p} + \frac{b^q}{q}, \,\,\, \text{for} \,\, a \geq 0, \, b\geq 0.
$
	We now take $q := \frac{p}{p-1}$ and
	\begin{align*}
	a := \left(\int_{\Omega} |\langle W,\nabla_{\G} u \rangle|^p dx \right)^{\frac{1}{p}} \quad \text{and} \quad b := \left( \int_{\Omega} |f|^{\frac{p}{p-1}}|u|^p dx\right)^{\frac{p-1}{p}}.
	\end{align*}
	By using Young's inequality in \eqref{proof2} and rearranging the terms, we arrive at 
	\begin{align}\label{prefinal1}
	\int_{\Omega_j} |\langle W,\nabla_{\G} u \rangle|^p dx &\geq \int_{\Omega} \left({\rm div_{\G}} (fW)   - (p-1)  |f|^{\frac{p}{p-1}}\right)|u|^p dx \\
	&- \int_{\partial \Omega_j} f \langle W, n_j(x) \rangle |u|^p d\Gamma_{\partial \Omega_j}(x) \nonumber.
	\end{align}
	We choose $W := I_i$ as a unit vector of the $i^{th}$ component and let
	\begin{equation*}
	f = \beta \frac{|\langle X_i (x), \nu_j\rangle|^{p-1}}{dist(x,\mathcal{F}_j)^{p-1}}.
	\end{equation*}
	As before a direct calculation shows that 
\begin{align*}
{\rm div_{\G}} (Wf) &=  X_i f = \beta X_i \left( \frac{|\langle X_i (x), \nu_j \rangle|}{dist(x,\partial \mathcal{F}_j)} \right)^{p-1}\\
&= \beta (p-1) \left( \frac{|\langle X_i (x), \nu_j \rangle|}{dist(x,\partial \mathcal{F}_j)} \right)^{p-2} X_i \left(\frac{\langle X_i (x), \nu_j \rangle}{dist(x,\partial \mathcal{F}_j)} \right)  \\
& =   \beta (p-1) \left( \frac{|\langle X_i (x), \nu_j \rangle|}{dist(x,\partial \mathcal{F}_j)} \right)^{p-2} \left( \frac{X_i\langle X_i(x),\nu_j\rangle }{dist(x,\partial \mathcal{F}_j)} -  \frac{|\langle X_i (x), \nu_j \rangle|^2}{dist(x,\partial \mathcal{F}_j)^2}\right) \\
& = \beta (p-1) \left[ \left( \frac{|\langle X_i (x), \nu_j \rangle|}{dist(x,\partial \mathcal{F}_j)} \right)^{p-2} \left( \frac{X_i\langle X_i(x),\nu_j\rangle }{dist(x,\partial \mathcal{F}_j)} \right) -  \frac{|\langle X_i (x), \nu_j \rangle|^{p}}{dist(x,\partial \mathcal{F}_j)^{p}}\right],
\end{align*}
and 
\begin{equation*}
|f|^{\frac{p}{p-1}} = |\beta|^{\frac{p}{p-1}} \frac{|\langle X_i (x), \nu_j\rangle|^{p}}{dist(x,\mathcal{F}_j)^{p}}.
\end{equation*}
We also have
\begin{equation*}
\langle W,\nabla_{\G} u \rangle = \overset{i}{(\overbrace{0,\ldots,1},\ldots,0)} \cdot (X_1u,\ldots, X_i u, \ldots,X_N u)^T = X_i u.
\end{equation*}
Inserting the above calculations into \eqref{prefinal1} and summing over $i=\overline{1,N}$, we arrive at
\begin{align}\label{equation1}
\int_{\Omega_j} \sum_{i=1}^{N}|X_i u|^p dx \geq& -(p-1)( |\beta|^{\frac{p}{p-1}} + \beta) \int_{\Omega_j} \sum_{i=1}^{N}\frac{|\langle X_i (x), \nu_j\rangle|^{p}}{dist(x,\partial\mathcal{F}_j)^{p}} |u|^p dx \\
&+ \beta(p-1)  \int_{\Omega_j} \sum_{i=1}^{N}\left( \frac{|\langle X_i (x), \nu_j \rangle|}{dist(x,\partial \mathcal{F}_j)} \right)^{p-2} \left( \frac{X_i\langle X_i(x),\nu_j\rangle }{dist(x,\partial \mathcal{F}_j)} \right) |u|^p dx \nonumber  \\
&-\beta\int_{\partial \Omega_j} \sum_{i=1}^{N}\left(\frac{|\langle X_i(x),\nu_j\rangle|}{dist(x,\mathcal{F}_j)}\right)^{p-1} \langle X_i(x), n_j(x) \rangle |u|^p d\Gamma_{\partial \Omega_j}(x). \nonumber
\end{align}

Now summing up over $\Omega_j$, and with the interior boundary terms we have
\begin{align*}\label{equa}
\int_{\Omega} \sum_{i=1}^{N} |X_i u|^p  dx \geq& -(p-1)( |\beta|^{\frac{p}{p-1}} + \beta)
 \sum_{i=1}^{N}\int_{\Omega} \frac{|\langle X_i (x), \nu \rangle|^{p}}{dist(x,\partial \Omega)^{p}} |u|^p dx \\
& + \beta(p-1) \sum_{i=1}^{N} \int_{\Omega} \left( \frac{|\langle X_i (x), \nu \rangle|}{dist(x,\partial \Omega)} \right)^{p-2} \left( \frac{X_i\langle X_i(x),\nu\rangle }{dist(x,\partial \Omega)} \right) |u|^p dx\\ 
&-\beta \sum_{j\neq l} \sum_{i=1}^{N}\int_{\Gamma_{jl}} \left(\frac{|\langle X_i(x),\nu_j\rangle|}{dist(x,\mathcal{F}_j)}\right)^{p-1} \langle X_i(x), n_{jl}(x) \rangle |u|^p d\Gamma_{jl} \\
=& -(p-1)( |\beta|^{\frac{p}{p-1}} + \beta)
\sum_{i=1}^{N}\int_{\Omega} \frac{|\langle X_i (x), \nu \rangle|^{p}}{dist(x,\partial \Omega)^{p}} |u|^p dx\\
&+ \beta(p-1) \sum_{i=1}^{N} \int_{\Omega} \left( \frac{|\langle X_i (x), \nu \rangle|}{dist(x,\partial \Omega)} \right)^{p-2} \left( \frac{X_i\langle X_i(x),\nu\rangle }{dist(x,\partial \Omega)} \right) |u|^p dx\\ 
&-\beta \sum_{j< l} \sum_{i=1}^{N}\int_{\Gamma_{jl}} \left[ \left(\frac{|\langle X_i(x),\nu_j\rangle|}{dist(x,\mathcal{F}_j)}\right)^{p-1} \langle X_i(x), n_{jl}(x) \rangle \right.\\
&-  \left.\left(\frac{|\langle X_i(x),\nu_l\rangle|}{dist(x,\mathcal{F}_l)}\right)^{p-1} \langle X_i(x), n_{jl}(x) \rangle \right] |u|^p d\Gamma_{jl}
\end{align*}
As in the earlier case if the boundary term is positive we can discard it, so we want to show that
\begin{equation*}
	\left[ \left(\frac{|\langle X_i(x),\nu_j\rangle|}{dist(x,\mathcal{F}_j)}\right)^{p-1} \langle X_i(x), n_{jl}(x) \rangle - \left(\frac{|\langle X_i(x),\nu_l\rangle|}{dist(x,\mathcal{F}_l)}\right)^{p-1} \langle X_i(x), n_{jl}(x) \rangle \right] \geq 0. 
\end{equation*}
Noting the fact that $ n_{jl} = \frac{\nu_j - \nu_l}{\sqrt{2-2\cos(\alpha_{jl})}}$ and $dist(x,\mathcal{F}_j)=dist(x,\mathcal{F}_l)$ on $\Gamma_{jl}$, we arrive at
\begin{align*}
& \frac{1}{2-2\cos(\alpha_{jl})}\left[ \left(\frac{|\langle X_i(x),\nu_j\rangle|}{dist(x,\mathcal{F}_j)}\right)^{p-1} \langle X_i(x),\nu_j - \nu_l \rangle - \left(\frac{|\langle X_i(x),\nu_l\rangle|}{dist(x,\mathcal{F}_l)}\right)^{p-1} \langle X_i(x), \nu_j - \nu_l \rangle \right]\\
&=\frac{ |\langle X_i(x), \nu_j \rangle|^p - |\langle X_i(x), \nu_j \rangle|^{p-1} \langle X_i(x), \nu_l \rangle - |\langle X_i(x), \nu_l \rangle|^{p-1} \langle X_i(x), \nu_j \rangle  +|\langle X_i(x), \nu_l \rangle|^p }{(2-2\cos(\alpha_{jl}))dist(x,\mathcal{F}_j)^{p-1}} \\ 
&=	\frac{ \left( |\langle X_i(x), \nu_j \rangle| -|\langle X_i(x), \nu_l \rangle| \right) \left( |\langle X_i(x), \nu_j \rangle|^{p-1} -|\langle X_i(x), \nu_l \rangle|^{p-1}\right)}{(2-2\cos(\alpha_{jl}))dist(x,\mathcal{F}_j)^{p-1}} \geq 0.
\end{align*}
Here we have used the equality $(a-b)(a^{p-1}-b^{p-1})=a^p-a^{p-1}b-b^{p-1}a+b^{p-1}$ with $a=|\langle X_i(x), \nu_j \rangle|$ and $b=|\langle X_i(x), \nu_l \rangle|$.
From the above expression we note that the boundary term in $\Omega$ is positive and $\beta <0$. By discarding the boundary term we complete the proof.
\end{proof}
\begin{rem}
	For $p\geq 2$, since
	\begin{equation}
	|\nabla_{\G} u|^p = \left( \sum_{i=1}^{N} |X_i u|^2 \right)^{\frac{p}{2}} \geq \sum_{i=1}^{N}\left(  |X_i u|^2 \right)^{\frac{p}{2}}, 
	\end{equation}
	we have the following inequality 
	\begin{align}
	\int_{\Omega} |\nabla_{\G} u|^p  dx \geq& C_2(\beta,p)
	\int_{\Omega} \frac{\mathcal{W}_p(x)^{p}}{dist(x,\partial \Omega)^{p}} |u|^p dx\\ \nonumber
	&+ \beta (p-1) \int_{\Omega} \sum_{i=1}^{N}\left( \frac{|\langle X_i (x), \nu \rangle|}{dist(x,\partial \Omega)} \right)^{p-2} \left( \frac{X_i\langle X_i(x),\nu\rangle }{dist(x,\partial \Omega)} \right) |u|^p dx.
	\end{align}
\end{rem}

\end{document}